\newtheorem{lemma}{Lemma}
\theoremstyle{definition}
\newtheorem{defn}{Definition}
\newtheorem{Theorem}{Theorem}
\newtheorem{Prop}{Proposition}
\theoremstyle{remark}
\newtheorem{rmk}{Remark}
\title{The Cumulant Bijection and Differential Forms}
\date{}
\author{Nissim Ranade and Dennis Sullivan}
\begin{document}
\maketitle
\section*{Description}

Given a graded commutative algebra $A$ there is a canonical (tautologous) map $\tau:SA\rightarrow A$ where $SA$ is the graded commutative algebra generated by $A$. The map $\tau$ is given by the formula \[\tau(x_1\wedge x_2\wedge\ldots x_n) = x_1x_2\ldots x_n \]

$SA = A \oplus A\wedge A \oplus\ldots$ also has a coproduct structure $\Delta:SA\rightarrow SA\wedge SA$ given by the formula \[\Delta(x_1\wedge x_2\wedge\ldots x_n) = \sum \epsilon\cdot x_{\pi_1}\wedge x_{\pi_2} \] where $\pi_1\sqcup\pi_2$ is a partition of the indices into two arbitrary subsets and $\epsilon$ is the sign that arises when the first subset is moved all the way to the left. One knows that any map like $\tau:SA\rightarrow A$ lifts to a canonical coalgebra mapping $\tilde{\tau}:SA\rightarrow SA$ so that $\pi\circ\tilde{\tau} = \tau$ where $\pi:SA\rightarrow A$ is projection onto $A$, the linear summand of $SA$. 

We verify that $\tilde{\tau}$ is given by the formula 
\begin{equation}
\label{formula}
 \tilde{\tau}(v) = \tau(v) +  \tau\wedge\tau \circ \Delta(v) + \tau^{\wedge 3}\circ \Delta^2(v) + \ldots 
\end{equation}

Note that $\tilde{\tau}(v) = v + \text{lower order terms}$. This implies that it is a bijection. We call this coalgebra isomorphism $\tilde{\tau}$ the \textit{cumulant bijection}.  

The relation of $\tilde{\tau}$ to cumulants appears when it is used to change coordinates in $SA$, in particular when it is used to conjugate extensions to $SA$ of a linear mapping of $A$ to either a coderivation of $SA$ or to a coalgebra mapping of $SA$. 

General coderivations $D:SA\rightarrow SA$ have canonical decompositions by orders $D=D_1+D_2+\ldots$ where $D_k$ is determined by ``Taylor coefficients" $D^k$; 

\[D^1:A\rightarrow A\] \[D^2:A\wedge A\rightarrow A\] \[D^3:A\wedge A\wedge A\rightarrow A\] and so on. 

The explicit formulae relating $D_k$ to $D^k$ are \[ D_k(x_1\wedge\ldots\wedge x_n) = \sum \epsilon\cdot D^k(x_I)\wedge\ldots\hat{x_I}\ldots\]
where the sum is taken over all subsets $I$ of size $k$ of the indexing set. 

A coalgebra mapping $G:SA \rightarrow SA$ is also determined by independent maps (also called the Taylor coefficients) \[G^1:A\rightarrow A\]\[G^2:A\wedge A\rightarrow A\] \[G^3:A\wedge A\wedge A\rightarrow A\] and so on by the formula \[G(x_1\wedge x_2\wedge\ldots) = \sum_{\text{partitions}}\epsilon\cdot G^{j_1}(x_{\pi_{j_1}})\wedge G^{j_2}(x_{\pi_{j_2}})\ldots\]

The canonical extension of a map $A\rightarrow A$ to either a coderivation or a coalgebra mapping, only has a non-zero leading order coefficient. After conjugating the canonical extensions by $\tilde{\tau}$ they have in general Taylor coefficients of all orders. 

In the case of a coalgebra extension $\tilde{f}$ of $f:A\rightarrow A$ the conjugated $g=\tilde{\tau}^{-1}\tilde{f}\tilde{\tau}$ has coefficients which measure the deviation of $f$ from being an algebra homomorphism of $A$. 

The first few Taylor coefficients of $g$ are \[g^1(x)=f(x)\]\[g^2(x,y) = f(xy)-f(x)f(y)\]\[g^3(x,y,z)=f(xyz)-f(xy)f(z)-f(yz)f(x)-f(zx)f(y)+2f(x)f(y)f(z)\]

In the case of a coderivation extension $\hat{f}$ of $f$, the first few coefficients of the conjugate $h=\tilde{\tau}^{-1}\hat{f}\tilde{\tau}$ are \[h^1(x)=f(x)\]\[h^2(x,y) = f(xy)-f(x)y-xf(y)\]\[h^3(x,y,z)=f(xyz)-f(xy)z+xyf(z)-f(yz)x+yxf(x)-f(zx)y+zxf(y)\]

In the first case all the higher Taylor coefficients vanish iff $f(xy)-f(x)f(y)=0$, that is $f$ is an algebra map.

In the second case all higher Taylor coefficients vanish iff $f(xy)-f(x)y-xf(y)=0$, that is $f$ is a derivation. 

\section{First Application}
Let us now consider the case where $A$ is a chain complex with a differential $\partial$ which is not necessarily a derivation of the product structure. Suppose $i:(C,\partial_C)\rightarrow(A,\partial)$ is a sub-complex of $(A,\partial)$ such that $A$ deformation retracts to $C$. That is, there is $I:(A,\partial)\rightarrow(C,\partial_C)$ so that $I\circ i=Id_C$ and there is a degree $+1$ mapping $s:A\rightarrow A$ so that $\partial s+s\partial=i\circ I-Id_A$. Now let $\tilde{I}$ and $\tilde{i}$ be the canonical lifts of $I$ and $i$ to coalgebra maps between $SA$ and $SC$.

\[
\begin{tikzcd}
SA \arrow[bend left]{d}{\tilde{I}}\arrow{r}[swap]{\pi} & A \arrow[bend left]{d}{I}\\
SC \arrow{u}{\tilde{i}} \arrow{r}[swap]{\pi} & C \arrow{u}{i} \\
\end{tikzcd}
\]

Let $d$ be the extension of $\partial$ to $SA$ and $\tilde{d} = \tilde{\tau}^{-1}d\tilde{\tau}$, Then by our previous discussion the cumulant bijection $\tilde{\tau}$ gives an isomorphism between $(SA,d)$ and $(SA,\tilde{d})$.
 
\begin{Theorem}  
\label{app}
Let $C$ be as above. Suppose there is a coderivation $\partial_{\infty}$ on $SC$ and a differential graded coalgebra map $\iota$ from $(SC,\partial_{\infty})$ to $(SA, \tilde{d})$ which extends $i$. Then there is an induced isomorphism, ``the induced cumulant bijection" $\tilde{\tau}_C$ between $(SC, \partial_{\infty})$ and $(SC,d_C)$ where $d_C$ is the canonical coderivation extension of $\partial_C$. $\tilde{\tau}_C$ is uniquely characterized by the commutativity of the following diagram.

\[
\begin{tikzcd}
(SA,\tilde{d}) \arrow{r}{\tilde{\tau}} & (SA,d) \arrow{d}{\tilde{I}}\\
(SC,\partial_{\infty}) \arrow{u}{\iota} \arrow{r}[swap]{\tilde{\tau}_C} & (SC,d_C) \\
\end{tikzcd}
\]
\end{Theorem}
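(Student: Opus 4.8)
The plan is to read the asserted commuting square as a \emph{definition}: set $\tilde{\tau}_C := \tilde{I}\circ\tilde{\tau}\circ\iota$. Since every other edge of the diagram is already fixed by the hypotheses, this is the only map that can occupy the bottom arrow, so uniqueness is automatic and the entire content of the theorem is that this particular composite is a well-defined differential graded coalgebra isomorphism from $(SC,\partial_\infty)$ to $(SC,d_C)$. The three factors to be composed are $\iota:(SC,\partial_\infty)\to(SA,\tilde{d})$ (a dg coalgebra map by hypothesis), $\tilde{\tau}:(SA,\tilde{d})\to(SA,d)$ (a coalgebra isomorphism which is a chain map precisely because $\tilde{d}=\tilde{\tau}^{-1}d\tilde{\tau}$), and $\tilde{I}:SA\to SC$, the canonical coalgebra lift of $I$.

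First I would check that $\tilde{I}$ is a chain map $(SA,d)\to(SC,d_C)$. This is the one place where the deformation-retract data enters: $I$ is a morphism of complexes, $I\partial=\partial_C I$, and the canonical coderivation extension is functorial for such morphisms, i.e. $\tilde{I}\circ d = d_C\circ\tilde{I}$. I would verify this directly on a generator $x_1\wedge\cdots\wedge x_n$: both sides expand, by the Leibniz-type formula defining $d$ and $d_C$, into $\sum_j \pm\, I(x_1)\wedge\cdots\wedge(\text{differential in slot }j)\wedge\cdots\wedge I(x_n)$, and the two expansions agree term by term exactly because $I\partial=\partial_C I$. (Note that the homotopy $s$, and the fact that $\partial$ need not be a derivation of the product, play no role in this step; they matter only for the separately hypothesized existence of $\partial_\infty$ and $\iota$.) Granting this, the chain-map property of $\tilde{\tau}_C$ follows by concatenation, $d_C\tilde{\tau}_C=d_C\tilde{I}\tilde{\tau}\iota=\tilde{I}d\tilde{\tau}\iota=\tilde{I}\tilde{\tau}\tilde{d}\iota=\tilde{I}\tilde{\tau}\iota\partial_\infty=\tilde{\tau}_C\partial_\infty$, and $\tilde{\tau}_C$ is a coalgebra map because it is a composite of coalgebra maps.

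It remains to see that $\tilde{\tau}_C$ is invertible, and here I would compute its leading Taylor coefficient. A coalgebra map sends a single (linear) element to a single element via its order-one coefficient, so under composition these coefficients simply compose; thus $\tilde{\tau}_C^{\,1}=\tilde{I}^{1}\circ\tilde{\tau}^{1}\circ\iota^{1}=I\circ\mathrm{id}_A\circ i=I\circ i=\mathrm{Id}_C$, using that $\tilde{I}$ is the canonical lift (so $\tilde{I}^1=I$ and higher coefficients vanish), that $\tilde{\tau}^1=\tau|_A=\mathrm{id}_A$, and that $\iota$ extends $i$. Hence $\tilde{\tau}_C(v)=v+(\text{lower order terms})$, and by exactly the triangularity argument already used for $\tilde{\tau}$ itself this makes $\tilde{\tau}_C$ a bijection; its inverse is again a coalgebra map and, being the inverse of a chain isomorphism, again a chain map. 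This yields the desired dg coalgebra isomorphism.

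I expect the only real obstacle to be the functoriality identity $\tilde{I}\circ d=d_C\circ\tilde{I}$ of the first step; everything else is formal diagram manipulation or an appeal to the ``identity-plus-lower-order is invertible'' principle already established for the cumulant bijection. The subtlety worth double-checking there is the bookkeeping of Koszul signs in the two Leibniz expansions, to confirm they genuinely match slot by slot under $I\partial=\partial_C I$.
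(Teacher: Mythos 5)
Your proposal is correct and takes essentially the same route as the paper: you define $\tilde{\tau}_C=\tilde{I}\circ\tilde{\tau}\circ\iota$ as forced by the diagram, note it is a composite of differential graded coalgebra maps, and deduce invertibility from $\tilde{\tau}_C(v)=v+\text{lower order terms}$ (identity on linear terms since $I\circ i=\mathrm{Id}_C$) via the same filtration induction as Lemma \ref{iso}. Your explicit verification that $\tilde{I}\circ d=d_C\circ\tilde{I}$ is a detail the paper delegates to the direct computation in part iii) of its lifting theorem, but it is the same argument.
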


\begin{proof}

As $\iota$ is an injection and $\tilde{I}$ is a surjection, $\tilde{\tau}_C$ must be defined to be equal to $ \tilde{I}\circ\tilde{\tau}\circ\iota$. The fact that $\tilde{\tau}_C$ is an isomorphism follows as in Lemma \ref{iso}, from the fact that since $\iota$ agrees with $i$ on the linear terms and $I\circ i$ is identity, $\tilde{\tau}_C$ induces the identity map on linear terms. Then since $\tilde{\tau}_C$ is a coalgebra map the inductive step of Lemma \ref{iso} holds. For more details see section \ref{Proof}.

\end{proof}

\begin{rmk}
Note that $SC$ receives the induced cumulant bijection from the map $\iota$ and not from a commutative algebra structure on $C$. 
\end{rmk}

\section{The missing details, the cumulant bijection in the associative context and more applications}

\begin{defn}
A \textit{graded coassociative conilpotent coalgebra} is a graded vector space $C$ together with a degree zero coassociative coproduct $\Delta:C\rightarrow C\otimes C$ with the property that for all $v$ in $C$ there exists an integer $n$ such that $\Delta^n(v)=0$. If the image of the coproduct is in the graded symmetric tensors then $C$ is called the \textit{graded symmetric coassociative conilpotent coalgebra}

\end{defn}

\begin{defn}
The \textit{free coassociative conilpotent coalgebra without co-unit} generated by a graded vector space $V$ is a nilpotent coalgebra $T^cV$ with a projection map onto $V$ with the following universal property: Given a linear map from a graded nilpotent coassociative coalgebra to $V$ there exists a unique coalgebra map $\tilde{f}$ from $C$ to $T^cV$ such that the following diagram commutes.

\begin{equation}
\begin{tikzcd}
 {} & C \arrow{d}{f} \arrow[dashed,swap]{dl}{\exists \tilde{f}} \\
  T^cV \arrow{r}{\pi} & V
\end{tikzcd}
\label{univ}
\end{equation}
 
\end{defn}

On the space $TV = V\oplus V^{\otimes2}\oplus V^{\otimes3}\ldots$ consider the coproduct $\Delta$ given by the following formula. \[ \Delta(x_1 \otimes x_2 \otimes\ldots\otimes x_n) = \sum_{i=1}^{n-1} x_1 \otimes \ldots x_i\bigotimes\ldots\otimes x_n \] $TV$ with the coproduct $\Delta$ is the universal free conilpotent coalgebra associated to $V$. The formula for the lift is given later in Lemma \ref{lift}. 

\begin{defn}
The \textit{free graded coassociative cocommutative nilpotent coalgebra} generated by $V$ is a sub-coalgebra $S^cV$ of $T^cV$ such that if $C$ taken as above is also graded cocommutative then the image of $\tilde{f}$ lies in $S^cV$, that is the following diagram commutes.

\begin{equation}
\begin{tikzcd}
 {} & C \arrow{d}{f} \arrow[dashed,swap]{dl}{\exists \tilde{f}} \\
  S^cV \arrow{r}{\pi} & V
\end{tikzcd}
\end{equation}
 
\end{defn}
 
\[S^cV = \bigoplus_{n=1}^\infty S^nV\] where $S^nV$ is the subspace of $V^{\otimes n}$ generated by \[\sum_{\sigma \in S_n} \pm x_{\sigma(1)}\otimes x_{\sigma(2)} \ldots x_{\sigma(n)}\] The signs in the summation are given by the degrees of $x_i$, for example in $S^2V$ is generated by elements of the form $x\otimes y + (-1)^{|x||y|} y\otimes x $.  The coproduct $\Delta$ restricts to a coproduct on $S^cV$. $S^cV$ is linearly isomorphic to $SV =V \oplus V\wedge V\oplus V\wedge V\wedge V\ldots $ by the map which sends $x_1\wedge x_2\wedge\ldots x_n$ in $SV$ to $\sum_{\sigma \in S_n} \pm x_{\sigma(1)} \otimes x_{\sigma(2)} \ldots x_{\sigma(n)}$ in $S^cV$. Note the product that $T^cV$ acquires from it's isomorphism to $TV$ does not restrict to the product $S^cV$ acquires from it's isomorphism to $SV$. Here onwards we will use $SV$ to denote both, the algebra with the usual product structure and the coalgebra with the induced coproduct structure.

\begin{lemma}
The map $\tilde{f}$ in (\ref{univ}) is given by the formula, \[ \tilde{f}(x) = f(x) + f\otimes f \circ \Delta_C (x) + f^{\otimes 3}\circ \Delta_C^2(x) + \ldots\] where $\Delta_C$ is the reduced coproduct on $C$. Since $C$ is nilpotent this sum is finite.
\label{lift}
\end{lemma}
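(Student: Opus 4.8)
The plan is to exploit the structure of the deconcatenation coproduct on $TV$ together with conilpotency of $C$, handling existence and uniqueness by a single combinatorial identity. Write $\tilde{f}_k = f^{\otimes k}\circ\Delta_C^{k-1}\colon C\to V^{\otimes k}\subset T^cV$, where $\Delta_C^{k-1}$ is the $(k-1)$-fold iterate of the reduced coproduct (with $\Delta_C^0=\mathrm{id}$). Conilpotency guarantees $\Delta_C^{k-1}(x)=0$ for $k$ large, so $\tilde{f}(x)=\sum_{k\ge 1}\tilde{f}_k(x)$ is a finite sum and is well defined. The first thing I would record is the trivial observation that $\pi\circ\tilde{f}=f$: the projection $\pi$ onto the $V=V^{\otimes 1}$ summand annihilates every $\tilde{f}_k$ with $k\ge 2$ and returns $\tilde{f}_1=f$, so the lift satisfies the defining condition $\pi\circ\tilde{f}=f$ of diagram (\ref{univ}).

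The combinatorial heart of the argument is the identity $\pi^{\otimes k}\circ\Delta^{k-1}=\mathrm{proj}_{V^{\otimes k}}$ on $T^cV$, where $\Delta$ is the deconcatenation coproduct. Indeed, applying the iterated reduced coproduct $\Delta^{k-1}$ to a word $w\in V^{\otimes n}$ produces the sum over all cuts of $w$ into $k$ nonempty consecutive blocks; composing with $\pi^{\otimes k}$ keeps only those summands in which every block is a single letter, which forces $n=k$ and leaves $w$ itself, and otherwise gives zero. I would use this to dispatch \textbf{uniqueness} at once: if $g\colon C\to T^cV$ is any coalgebra map with $\pi\circ g=f$, then iterating the coalgebra condition gives $\Delta^{k-1}\circ g=g^{\otimes k}\circ\Delta_C^{k-1}$, and composing with $\pi^{\otimes k}$ yields
\[
g_k \;=\; \pi^{\otimes k}\circ\Delta^{k-1}\circ g \;=\; (\pi\circ g)^{\otimes k}\circ\Delta_C^{k-1} \;=\; f^{\otimes k}\circ\Delta_C^{k-1} \;=\; \tilde{f}_k,
\]
where $g_k$ denotes the $V^{\otimes k}$-component of $g$. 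Thus any lift must coincide with the stated formula, which establishes both uniqueness and the correct identification of the components.

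It remains to check \textbf{existence}, that is, that the map $\tilde{f}$ defined by the formula is genuinely a coalgebra map, $\Delta\circ\tilde{f}=(\tilde{f}\otimes\tilde{f})\circ\Delta_C$. Here I would compute both sides componentwise. Applying deconcatenation to $\tilde{f}_n=f^{\otimes n}\circ\Delta_C^{n-1}$ and splitting each rank-$n$ word after its $a$-th letter produces exactly the $(a,b)$-bigraded piece $(f^{\otimes a}\otimes f^{\otimes b})$ applied to the splitting of $\Delta_C^{n-1}$ into its first $a$ and last $b$ factors, with $a+b=n$. On the other side, the $(a,b)$-component of $(\tilde{f}\otimes\tilde{f})\circ\Delta_C$ is $\bigl(f^{\otimes a}\circ\Delta_C^{a-1}\otimes f^{\otimes b}\circ\Delta_C^{b-1}\bigr)\circ\Delta_C$. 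These agree precisely because of coassociativity of $\Delta_C$, which gives $(\Delta_C^{a-1}\otimes\Delta_C^{b-1})\circ\Delta_C=\Delta_C^{a+b-1}$. This matching, done term by term over all $a,b\ge 1$, is the one place where real work and care are required, and I expect the main obstacle to be the Koszul sign bookkeeping in the graded tensor factors $f^{\otimes a}\otimes f^{\otimes b}$ and in the iterated coproducts; the uniqueness half, by contrast, is essentially formal once the projection identity is in hand. The restriction to the cocommutative sub-coalgebra $S^cV$ then follows since a cocommutative $C$ makes each $\Delta_C^{k-1}(x)$ symmetric, so each $\tilde{f}_k(x)$ lands in $S^kV$.
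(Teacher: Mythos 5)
Your proposal is correct, and its existence half is essentially the paper's argument in different clothing: the paper verifies $\Delta_T\circ\tilde{f}=\tilde{f}\otimes\tilde{f}\circ\Delta_C$ by expanding both sides in Sweedler notation and matching terms, which is exactly your componentwise matching of the $(a,b)$-bigraded pieces via the coassociativity identity $(\Delta_C^{a-1}\otimes\Delta_C^{b-1})\circ\Delta_C=\Delta_C^{a+b-1}$. Where you genuinely go beyond the paper is the uniqueness half. The paper's proof of Lemma \ref{lift} only checks that the stated formula is a lift and a coalgebra map, leaving uniqueness implicit in the universal property asserted in the definition; your projection identity $\pi^{\otimes k}\circ\Delta^{k-1}=\mathrm{proj}_{V^{\otimes k}}$ on $T^cV$ (correct for deconcatenation, since the only cut of a word into $k$ singleton blocks occurs when the word has length exactly $k$, and then it is unique) gives a clean, self-contained proof that any coalgebra lift of $f$ is forced to have $V^{\otimes k}$-component $f^{\otimes k}\circ\Delta_C^{k-1}$. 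That buys you something the paper does not spell out: the formula is not merely a lift but the only one, which simultaneously justifies the universal property of $T^cV$ stated in diagram (\ref{univ}). Two small points of care: your derivation $\Delta^{k-1}\circ g=g^{\otimes k}\circ\Delta_C^{k-1}$ deserves the one-line induction using coassociativity, and your sign worries in the existence check are harmless here since $f$ and the coproducts all have degree zero, so no Koszul signs actually arise. Your closing remark about the cocommutative case landing in $S^cV$ matches the paper's subsequent discussion.
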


\begin{proof}
Note that $\pi \circ \tilde{f}(x) = f(x) $. To prove the lemma we need to show that \[\Delta_T \circ \tilde{f} = \tilde{f}\otimes\tilde{f} \circ \Delta_C \]

The Sweedler notation for higher powers of $\Delta_C$ applied to $x$ is \[ \Delta_C^n (x) = \sum_{(x)} x_{(1)}\otimes x_{(2)}\otimes\ldots\otimes x_{(n+1)} \]

So the formula for $\tilde{f}$ as defined becomes \[\tilde{f}(x) = \sum_{n} \sum_{(x)} f(x_{(1)})\otimes f(x_{(2)})\otimes\ldots\otimes f(x_{(n+1)}) \] and when we apply the coproduct to this we get \[\Delta_T \circ \tilde{f}(x) = \sum_{n} \sum_{(x)} \sum_i f(x_{(1)})\otimes\ldots\otimes f(x_{(i)})\bigotimes\ldots\otimes f(x_{(n+1)}) \]

On the other hand we have 
\[\tilde{f}\otimes\tilde{f}\circ\Delta_C(x) = \sum_{(x)}\tilde{f}(x_{(1)})\otimes\tilde{f}(x_{(2)}) \]
\[ = \sum_{(x)}\sum_{i,j}f(x_{(1)})\otimes\ldots\otimes f(x_{(i)})\bigotimes\ldots\otimes f(x_{(i+j)}) \]
\[ = \sum_{n} \sum_{(x)} \sum_i f(x_{(1)})\otimes\ldots\otimes f(x_{(i)})\bigotimes\ldots\otimes f(x_{(n+1)}) \]
which is what is required.
\end{proof}

If $C$ is graded cocommutative then the image of $\tilde{f}$ is in $S^cV$ which we have identified with $SV$. So the formula in Lemma \ref{lift} becomes \[ \tilde{f}(x) = f(x) + f\wedge f \circ \Delta_C (x) + f^{\wedge 3}\circ \Delta_C^2(x) + \ldots\]

Suppose $A$ is a graded commutative associative algebra. Then consider the map $\tau:SA\rightarrow A$ which takes $x_1\wedge x_2\wedge\ldots x_n$ to $x_1x_2\ldots x_n$. We call this map the tautologous map. Since this is a map from a graded cocommutative coalgebra we can lift it to a map $\tilde{\tau}:SA\rightarrow SA$ and from Lemma \ref{lift} the formula for $\tilde{\tau}$ is given by the equation \ref{formula}.

\begin{lemma}
\label{iso}
$\tilde{\tau}:SA\rightarrow SA$ is a coalgebra isomorphism.
\end{lemma}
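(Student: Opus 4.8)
The plan is to exploit the fact that $\tilde\tau$ is \emph{already} a coalgebra morphism---this is exactly the content of Lemma~\ref{lift} applied to $\tau$, together with formula~(\ref{formula})---so that the entire statement reduces to checking that $\tilde\tau$ is a bijection of the underlying graded vector space. Indeed, if a coalgebra morphism $\phi$ is a linear bijection then its linear inverse is automatically a coalgebra morphism: composing the intertwining relation $\Delta\circ\phi=(\phi\otimes\phi)\circ\Delta$ on the right with $\phi^{-1}$ and on the left with $\phi^{-1}\otimes\phi^{-1}$ yields $(\phi^{-1}\otimes\phi^{-1})\circ\Delta=\Delta\circ\phi^{-1}$. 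So I only need linear invertibility.

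To obtain invertibility I would filter $SA$ by the word-length grading $SA=\bigoplus_{n\ge 1}S^nA$, with $S^nA=A^{\wedge n}$, and set $F_n=\bigoplus_{k\le n}S^kA$; this is an exhaustive increasing filtration. The key structural observation, read off directly from formula~(\ref{formula}), is that the $k$-th summand $\tau^{\wedge k}\circ\Delta^{k-1}$ sends $S^mA$ into $S^kA$ and is nonzero only when $1\le k\le m$, since $\Delta^{k-1}$ splits an element of $S^mA$ into $k$ nonempty blocks. Hence $\tilde\tau(S^mA)\subseteq F_m$, so $\tilde\tau$ is filtration preserving, and with respect to the block decomposition of $F_N$ it is triangular: its only degree-preserving component on $S^nA$ is the top term $\tau^{\wedge n}\circ\Delta^{n-1}$.

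The main point, and the step I expect to require the most care, is to verify that this top component $\tau^{\wedge n}\circ\Delta^{n-1}\colon S^nA\to S^nA$ is exactly the identity, i.e.\ that the induced map on the associated graded $\mathrm{gr}_n=S^nA$ is $\mathrm{id}$ rather than some other scalar. Using the identification $SV\cong S^cV$ of the excerpt, an element $x_1\wedge\cdots\wedge x_n$ corresponds to the full symmetrization $\sum_{\sigma}\pm\,x_{\sigma(1)}\otimes\cdots\otimes x_{\sigma(n)}$; the iterated reduced coproduct $\Delta^{n-1}$ deconcatenates each length-$n$ word into $n$ singletons in a unique way, and $\tau^{\wedge n}$ acts as the identity on singleton blocks, so the result is again the symmetrization of $x_1\wedge\cdots\wedge x_n$. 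The delicate bookkeeping is precisely that the multiplicity produced by the sum over $\sigma$ is exactly the symmetrization built into the isomorphism $SV\cong S^cV$, so no spurious combinatorial factor appears and one lands on the identity on the nose; this is the rigorous form of the remark ``$\tilde\tau(v)=v+\text{lower order terms}$'' made in the Description.

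Granting this, $\tilde\tau$ restricted to each $F_N$ is triangular with identity diagonal blocks, hence invertible on $F_N$; these inverses are compatible as $N$ grows because everything is filtration preserving, so they assemble into a linear inverse of $\tilde\tau$ on all of $SA=\bigcup_N F_N$. By the first paragraph this inverse is a coalgebra morphism, and therefore $\tilde\tau$ is a coalgebra isomorphism. Equivalently, one could construct the inverse directly by solving for its Taylor coefficients order by order, which is the inductive mechanism invoked in the proof of Theorem~\ref{app}.
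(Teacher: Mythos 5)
Your proof is correct and takes essentially the same route as the paper: filter $SA$ by word length $F_n=\bigoplus_{k\le n}\wedge^kA$, note that $\tilde{\tau}$ preserves the filtration and induces the identity on each graded piece $\wedge^nA$ (the paper phrases this as induction via the short exact sequence $0\to F_{n-1}\to F_n\to\wedge^nA\to 0$, you as triangularity with identity diagonal blocks), and pass to the direct limit. Your two elaborations---the careful check via the identification $SV\cong S^cV$ that the top component $\tau^{\wedge n}\circ\Delta^{n-1}$ is the identity with no spurious combinatorial factor, and the remark that the linear inverse of a bijective coalgebra morphism is automatically a coalgebra morphism---correctly fill in details the paper asserts without proof.
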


\begin{proof}
Let $F_n = \bigoplus_{i=1}^n \wedge^iA$ where $\wedge^iA$ is the graded symmetric product of $i$ copies of $A$. Then $F_n$ defines an increasing filtration of $SA$ and $SA$ is the direct limit of this sequence of sub-coalgebras. \[A = F_1 \hookrightarrow F_2 \hookrightarrow \ldots F_n \hookrightarrow \ldots \] For $v \in \wedge^nA$, $\tilde{\tau}$ is given by the formula \[ \tilde{\tau}(v) = \tau(v) + \tau \wedge \tau (\Delta(v)) + \ldots + \tau^{\wedge n} (\Delta^{n-1}(v)) \] Note that the highest degree term in the formula is $v$ and so this formula preserves the filtration $F_n$. We will show by induction that $\tilde{\tau}$ restricted to $F_n$ is an isomorphism for each $n$. Now on $F_1$, $\tilde{\tau}$ is identity hence an isomorphism. Now consider the following short exact sequence \[ 0 \rightarrow F_{n-1} \hookrightarrow F_n \rightarrow \wedge^nA \rightarrow 0\] On $F_{n-1}$, $\tilde{\tau}$ is an isomorphism by induction hypothesis and on $\wedge^nA$ the map induced is the identity map. So $\tilde{\tau}$ is an isomorphism on $F_n$. Since it's restriction to each $F_n$ is an isomorphism, $\tilde{\tau}$ is an isomorphism from $SA$ to $SA$. 
\end{proof}

A linear map from a graded commutative algebras $A$ to itself can be extended to either a coderivation or a coalgebra map from $SA$ to $SA$. These can then be conjugated by $\tilde{\tau}$ as it is an isomorphism.  The linear map itself doesn't have to be an algebra map or a derivation. 

\begin{Theorem}
\begin{itemize}
\item[i)]
Suppose $d$ is any linear map from a graded commutative algebra $A$ to $A$. Then there exists a unique coderivation $\tilde{d}$ from $SA$ to $SA$ such that the following diagram commutes. 

\begin{equation}
\label{coder}
\begin{tikzcd}
SA \arrow{r}{\tau} \arrow[swap]{d}{\tilde{d}} & A \arrow{d}{d} \\
SA \arrow{r}{\tau} & A
\end{tikzcd}
\end{equation}

The coderivation construction, $d$ gives $\tilde{d}$, preserves commutators. This implies that if $d$ has degree $-1$ and squares to zero then $\tilde{d}$ also has degree$-1$ and squares to zero.

\item[ii)]
Suppose $f$ is any linear map between graded commutative algebras $A$ to $B$. Then there exists a unique coalgebra morphism $\hat{f}$ from $SA$ to $SB$ such that the following diagram commutes. 

\begin{equation}
\label{hom}
\begin{tikzcd}
SA \arrow{r}{\tau_A} \arrow[swap]{d}{\hat{f}} & A \arrow{d}{f} \\
SB \arrow{r}{\tau_B} & B
\end{tikzcd}
\end{equation}

Note that the uniqueness implies that the coalgebra construction , $f$ gives $\hat{f}$, respects composition. 

\item[iii)]
(\cite{homotopy probability I} and \cite{homotopy probability II}) Suppose $A$ and $B$ are graded commutative algebras and $d_A$ and $d_B$ are linear maps, not necessarily derivations, of degree $-1$ and square zero on $A$ and $B$ respectively. Suppose $f$ is a linear map, not necessarily an algebra map, from $A$ to $B$ such that $f\circ d_A = d_B\circ f$. Then $\hat{f}$ as in ii) is the unique homomorphism of differential graded coalgebras from $(SA, \tilde{d_A})$ to $(SB,\tilde{d_B})$ such that the following diagram commutes.

\begin{equation}
\label{dgc:hom}
\begin{tikzcd}
(SA,\tilde{d_A}) \arrow{r}{\tau_A} \arrow[swap]{d}{\hat{f}} & (A,d_A) \arrow{d}{f} \\
(SB,\tilde{d_B}) \arrow{r}{\tau_B} & (B,d_B)
\end{tikzcd}
\end{equation}

\end{itemize}

\end{Theorem}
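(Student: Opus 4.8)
The plan is to reduce all three parts to properties of the \emph{canonical} extensions --- the coderivation $\bar d$ and the coalgebra map $\bar f$ whose only nonzero Taylor coefficient is the leading one --- and then to transport everything through the cumulant bijection by conjugation. The structural fact I will use repeatedly is the universal property dual to Lemma \ref{lift}: a coderivation $D:SA\to SA$ is uniquely determined by its corestriction $\pi\circ D:SA\to A$, and a coalgebra map $SA\to SB$ is uniquely determined by $\pi_B\circ(\cdot):SA\to B$ (the latter is exactly Lemma \ref{lift} for the cocommutative coalgebra $SA$). The only extra bookkeeping input is the identity $\pi\circ\tilde\tau=\tau$, equivalently $\tau\circ\tilde\tau^{-1}=\pi$, together with the fact that, since $\tilde\tau$ is a coalgebra isomorphism (Lemma \ref{iso}), conjugation by $\tilde\tau$ carries coderivations to coderivations and coalgebra maps to coalgebra maps.

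For (i) I would set $\tilde d:=\tilde\tau^{-1}\,\bar d\,\tilde\tau$, where $\bar d$ is the canonical coderivation extension of $d$, characterized by $\pi\circ\bar d=d\circ\pi$. Then $\tilde d$ is a coderivation by conjugation, and $\tau\circ\tilde d=\tau\tilde\tau^{-1}\bar d\tilde\tau=\pi\bar d\tilde\tau=d\pi\tilde\tau=d\circ\tau$, so (\ref{coder}) commutes. For uniqueness, given any coderivation $\tilde d'$ with $\tau\tilde d'=d\tau$, the conjugate $\tilde\tau\tilde d'\tilde\tau^{-1}$ is a coderivation whose corestriction is $\pi\tilde\tau\tilde d'\tilde\tau^{-1}=\tau\tilde d'\tilde\tau^{-1}=d\tau\tilde\tau^{-1}=d\pi$, so it equals $\bar d$ by the universal property, forcing $\tilde d'=\tilde d$. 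The commutator claim comes from first checking $\widetilde{[d_1,d_2]}=[\bar d_1,\bar d_2]$ --- both sides are coderivations and their corestrictions agree because $\bar d$ preserves each $\wedge^n A$ and acts as $d$ on $A=\wedge^1 A$, giving $\pi[\bar d_1,\bar d_2]=[d_1,d_2]\pi$ --- and then observing that conjugation by $\tilde\tau$ is a homomorphism of graded Lie algebras. Taking $d_1=d_2=d$ of degree $-1$ with $d^2=0$ yields $2\tilde d^2=[\tilde d,\tilde d]=\widetilde{[d,d]}=\widetilde{2d^2}=0$, while conjugation by a degree-$0$ map preserves the degree.

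Part (ii) is the exact analogue: set $\hat f:=\tilde\tau_B^{-1}\,\bar f\,\tilde\tau_A$ with $\bar f$ the canonical coalgebra extension of $f$ (so $\pi_B\bar f=f\pi_A$, which exists and is unique by Lemma \ref{lift}); then $\hat f$ is a coalgebra map and $\tau_B\hat f=\pi_B\bar f\tilde\tau_A=f\pi_A\tilde\tau_A=f\tau_A$, giving (\ref{hom}). Uniqueness is proved by conjugating back and invoking Lemma \ref{lift} as above, and compatibility with composition follows formally: $\hat g\hat f$ is a coalgebra map with $\tau_C\hat g\hat f=g\tau_B\hat f=gf\tau_A$, so by uniqueness $\hat g\hat f=\widehat{gf}$.

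For (iii), conjugation reduces the chain-map condition $\hat f\,\tilde{d_A}=\tilde{d_B}\,\hat f$ to the identity $\bar f\,\bar{d_A}=\bar{d_B}\,\bar f$ for the canonical extensions, since $\hat f\tilde{d_A}=\tilde\tau_B^{-1}\bar f\bar{d_A}\tilde\tau_A$ and $\tilde{d_B}\hat f=\tilde\tau_B^{-1}\bar{d_B}\bar f\tilde\tau_A$. Both $\bar f\bar{d_A}$ and $\bar{d_B}\bar f$ are coderivations along the coalgebra map $\bar f$, i.e.\ satisfy the twisted rule $\Delta_B\Phi=(\Phi\otimes\bar f+\bar f\otimes\Phi)\Delta_A$, as one checks directly from the coderivation and coalgebra-map identities; by the same inductive argument as Lemmas \ref{lift} and \ref{iso} such twisted coderivations are again determined by their corestriction to $B$. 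Since $\pi_B\bar f\bar{d_A}=f\pi_A\bar{d_A}=fd_A\pi_A$ while $\pi_B\bar{d_B}\bar f=d_B\pi_B\bar f=d_Bf\pi_A$, the hypothesis $fd_A=d_Bf$ makes the two corestrictions equal, so $\bar f\bar{d_A}=\bar{d_B}\bar f$ and $\hat f$ is a map of differential graded coalgebras; commutativity of (\ref{dgc:hom}) is then (ii) together with the given chain-map property of $f$, and uniqueness follows from (ii) since a differential graded coalgebra map is in particular a coalgebra map. The main obstacle is to pin down the universal property in its three guises --- coderivations, coalgebra maps, and coderivations along a coalgebra map --- and to keep the Koszul signs consistent when passing between a corestriction and the full operator; the twisted-coderivation determination used in (iii) is the only genuinely new instance, the remainder being the coderivation counterpart of Lemma \ref{lift}.
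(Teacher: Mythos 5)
Your proposal is correct and takes essentially the same route as the paper: extend $d$ and $f$ canonically so they commute with the projection $\pi$, conjugate by the cumulant bijection, and derive uniqueness and the commutator/composition statements from the fact that coderivations and coalgebra maps into $SA$ are determined by their corestrictions, with your twisted-coderivation lemma in (iii) being merely an abstract packaging of the paper's ``direct computation'' that $F\circ D_A = D_B\circ F$. One point in your favor: your conjugation direction $\tilde{d}=\tilde{\tau}^{-1}\bar{d}\tilde{\tau}$ is the one that actually makes (\ref{coder}) commute (and it agrees with $\tilde{d}=\tilde{\tau}^{-1}d\tilde{\tau}$ in the paper's Description section), whereas the line $\tilde{d}:=\tilde{\tau}\circ D\circ\tilde{\tau}^{-1}$ in the paper's proof has the factors transposed --- evidently a typo, since with that order $\tau\circ\tilde{d}$ computes to $2(d(x)y\pm xd(y))-d(xy)$ rather than $d(xy)$ on $x\wedge y$.
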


\begin{proof}
\begin{itemize}
\item[i)]
A linear map $d$ from $A$ to $A$ can be uniquely extended to a coderivation $D$ on $SA$ which commutes with the projection map to $A$ for the coalgebra structure on $SA$. That is there exists a unique $D$ such that the following diagram commutes. 

\begin{equation}
\begin{tikzcd}
SA \arrow{r}{\pi} \arrow[swap]{d}{\tilde{d}} & A \arrow{d}{d} \\
SA \arrow{r}{\pi} & A
\end{tikzcd}
\end{equation}

where $\pi$ is the projection map. $D$ is given by the following formula. \[D(x_1\wedge x_2\wedge\ldots x_n) = \sum_i \pm x_1\wedge\ldots\wedge d(x_i)\wedge\ldots x_n \]The signs in the sum depend on the degrees of $x_i$ and the degree of the map $d$. Consider the map $\tilde{d}:= \tilde{\tau}\circ D\circ\tilde{\tau}^{-1}$. This map is the unique coderivation on $SA$ which makes (\ref{coder}) commute.

If $d_1$ and $d_2$ are two linear maps of some degrees from $A$ to $A$ and let $D_1$ and $D_2$ be the coderivations on $SA$ which commute with the projection maps. Since the bracket of two coderivations is a coderivation, $[D_1, D_2]$ is a coderivation. Besides this is the unique coderivation which extends $[d_1, d_2]$. So, $[\tilde{d_1},\tilde{d_2}]$ is the unique coderivation with the above property for the map $[d_1,d_2]$. Thus if $d$ has degree $-1$ and squares to zero, then $\tilde{d}^2 = 1/2 [\tilde{d},\tilde{d}]$ is a coderivation which uniquely extends $d^2$. Since $d^2$ is zero, $\tilde{d}^2$ will also be zero. So maps of degree $-1$ which square to zero give coderivations which square to zero on $SA$. 

\item[ii)]
A linear map $f$ between $A$ and $B$ can be uniquely extended to a homomorphism of coalgebras $F$ from $SA$ to $SB$ which commutes with the projection maps. That is there exists a homomorphism of coalgebras such that the following diagram commutes. 

\begin{equation}
\begin{tikzcd}
SA \arrow{r}{\pi_A} \arrow[swap]{d}{F} & A \arrow{d}{f} \\
SB \arrow{r}{\pi_B} & B
\end{tikzcd}
\end{equation}

$F$ is given by the following formula. \[ F(x_1\wedge x_2\wedge\ldots x_n) = f(x_1)\wedge f(x_2)\wedge\ldots f(x_n)\] The map $\hat{f} = \tilde{\tau}\circ F\circ\tilde{\tau}^{-1}$ is then the unique coalgebra homomorphism which makes (\ref{hom}) commute.

\item[iii)] 
We only need to check that $\hat{f}\circ \tilde{d_A} = \tilde{d_B}\circ \hat{f}$. Let $D_A$ and $D_B$ be the coderivations on $SA$ and $SB$ respectively which commute with the projection maps and $F$ be the coalgebra homomorphism from $SA$ to $SB$ which commutes with the projections. Since $f\circ d_A = d_B\circ f$, by direct computation we have that $F\circ D_A = D_B\circ F$. It follows then that $\hat{f}\circ \tilde{d_A} = \tilde{d_B}\circ \hat{f}$. 

\end{itemize}
\end{proof}

Conjugation by $\tilde{\tau}$ of the lift to a coalgebra map measures the deviation of the original map from being an algebra homeomorphism and that of the lift to a coderivation measures how far the original map was from being a derivation for the algebra structure. 

\begin{Prop} Let $d$ be a linear map from $A$ to $A$ and $\tilde{d}$ be the coderivation constructed in the previous theorem. Let $f$ be a linear map from $A$ to $B$ and $\hat{f}$ be homomorphisms constructed from $f$ in the last theorem.   
\begin{itemize}
\item[i)]$d$ is a derivation if and only if \[\tilde{d}(x_1\wedge x_2\wedge\ldots x_n) = \sum_i \pm x_1x_2\ldots d(x_i)\ldots x_n \]
\item[ii)] $f$ is a homomorphism of algebras if and only if \[\hat{f}(x_1\wedge x_2\wedge\ldots x_n) = \tau(f(x_1)\wedge f(x_2)\wedge\ldots f(x_n)) = f(x_1)f(x_2)\ldots f(x_n)\]
\end{itemize}

\end{Prop}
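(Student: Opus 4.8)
The plan is to read each displayed identity as the assertion that the conjugated map coincides with the \emph{naive} extension of $d$ (resp.\ $f$). Writing $D$ for the canonical coderivation with $D(x_1\wedge\cdots\wedge x_n)=\sum_i\pm\, x_1\wedge\cdots\wedge d(x_i)\wedge\cdots\wedge x_n$ and $F$ for the canonical coalgebra map with $F(x_1\wedge\cdots\wedge x_n)=f(x_1)\wedge\cdots\wedge f(x_n)$, the right-hand sides displayed in the statement are exactly $\tau\circ D$ and $\tau_B\circ F$ applied to $x_1\wedge\cdots\wedge x_n$. Thus part i) amounts to ``$\tilde d=D$ iff $d$ is a derivation'' and part ii) to ``$\hat f=F$ iff $f$ is an algebra map,'' and I would prove these two equivalences, reading off the displayed product formulas by post-composing with $\tau$. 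The main leverage is the \emph{uniqueness} clauses of the previous theorem: $\tilde d$ is the unique coderivation with $\tau\circ\tilde d=d\circ\tau$, and $\hat f$ is the unique coalgebra map with $\tau_B\circ\hat f=f\circ\tau_A$.

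For the ``if'' direction of i), suppose $d$ is a derivation. Then I would check that the naive extension $D$ already fits into diagram (\ref{coder}), i.e.\ $\tau\circ D=d\circ\tau$. Evaluating both sides on $x_1\wedge\cdots\wedge x_n$, this is the identity $\sum_i\pm\, x_1\cdots d(x_i)\cdots x_n = d(x_1\cdots x_n)$, the iterated Leibniz rule, which I would prove by induction on $n$: the case $n=2$ is the definition of a derivation, and the inductive step factors $x_1\cdots x_n=(x_1\cdots x_{n-1})x_n$, applies the Leibniz rule once and then the inductive hypothesis, bookkeeping the Koszul signs. Since $D$ is a coderivation satisfying the defining diagram, uniqueness forces $\tilde d=D$, which is the claimed formula. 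The argument for ii) is parallel: if $f$ is an algebra map, then $\tau_B\circ F=f\circ\tau_A$ is the iterated multiplicativity $f(x_1)\cdots f(x_n)=f(x_1\cdots x_n)$, again an induction on $n$; as $F$ is a coalgebra map satisfying (\ref{hom}), uniqueness gives $\hat f=F$.

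For the ``only if'' directions I would simply specialize. If $\tilde d=D$, then, because $\tau\circ\tilde d=d\circ\tau$ holds by construction, we get $\tau\circ D=d\circ\tau$; reading this off on a two-fold wedge $x\wedge y$ yields precisely $d(xy)=d(x)y+(-1)^{|d||x|}x\,d(y)$, so $d$ is a derivation. Likewise $\hat f=F$ together with $\tau_B\circ\hat f=f\circ\tau_A$ gives $\tau_B\circ F=f\circ\tau_A$, whose value on $x\wedge y$ is $f(x)f(y)=f(xy)$, so $f$ is an algebra map.

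I expect the only genuine work to be the two inductions establishing iterated Leibniz and multiplicativity with the correct signs; everything else is a formal consequence of the uniqueness already proved. The one point requiring care is the passage between the ``wedge'' form of $D$ and $F$ and the ``product'' form displayed in the statement: these agree after applying $\tau$, and since $\tau\circ\tilde d=d\circ\tau$ (resp.\ $\tau_B\circ\hat f=f\circ\tau_A$) holds identically, each reading of the displayed identity is equivalent to the corresponding algebraic condition on $d$ (resp.\ $f$), so the proof is insensitive to which reading one adopts.
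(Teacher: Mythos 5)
Your proposal is correct and follows essentially the same route as the paper: both reduce the statement to ``$\tilde d=D$ iff $d$ is a derivation'' (resp.\ ``$\hat f=F$ iff $f$ is an algebra map'') by comparing $\tau\circ D$ with $d\circ\tau$ (resp.\ $\tau_B\circ F$ with $f\circ\tau_A$) and then invoking the uniqueness clause of the preceding theorem. The only difference is that you spell out the iterated Leibniz/multiplicativity inductions and the $n=2$ specialization, which the paper's proof treats as immediate.
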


\begin{proof}
\begin{itemize}
\item[i)]
In the proof of the last theorem we defined the coderivation $D$ such that it satisfies the formula \[\tau\circ D(x_1\wedge x_2\wedge\ldots x_n) = \sum_i \pm x_1x_2\ldots d(x_i)\ldots x_n \] This expression is equal to $d(x_1x_2\ldots x_n) = d\circ\tau(x_1\wedge x_2\wedge\ldots x_n)$ if and only if $d$ is a derivation for the algebra $A$. Thus we have that $d\circ\tau = \tau\circ D$ if and only if $d$ is a derivation for the algebra $A$. Since $\tilde{d}$ is the unique coderivation that commutes with $\tau$, $\tilde{d} = D$ if and only if $d$ is a derivation. 
\item[ii)]
Similar to the proof of the first part \[\tau\circ F(x_1\wedge x_2\wedge\ldots x_n) = \tau(f(x_1)\wedge f(x_2)\wedge\ldots f(x_n)) = f(x_1)f(x_2)\ldots f(x_n) \] which is equal to $f(x_1x_2\ldots x_n)$ if and only if $f$ is a homomorphism. By uniqueness of $\tilde{f}$ we have that it is equal to $F$ if and only if $f$ is a homomorphism. 

\end{itemize}
\end{proof} 

\section{More detailed Proof of Theorem \ref{app}}
\label{Proof}
\begin{proof}

The proof is similar to the proof of Lemma \ref{iso}. Define $\tilde{\tau}_C$ to be $\tilde{I}\circ\tilde{\tau}\circ\iota$. This map is a composition of differential coalgebra maps and hence is itself a differential coalgebra map. Let $F_n = \bigoplus_{i=1}^n\wedge^iC$, where $\wedge^iC$ is the graded symmetric product of $i$ copies of $C$. The ${F_n}$ is the filtration on $SC$. We will use induction on the degree of the filtration to prove that $\tilde{\tau}_C$ is an isomorphism. As $\iota$ preserves the filtration $\tilde{\tau}_C$ also preserves the filtration. Also as $\iota$ agrees with $i$ on $C=F_1$ and since $\tilde{\tau}$ is identity on $A$, $\tilde{\tau}_C$ is identity on $F_1$. Now suppose $\tilde{\tau}_C$ is an isomorphism when restricted to $F_{n-1}$. Since $\tilde{\tau}_C$ is a coalgebra mapping, for $v=x_1\wedge x_2\wedge\ldots x_n$ in $\wedge^nC$ we have that $\tilde{\tau}_C^{\wedge n}\circ\Delta^{n-1}(v)=\Delta^{n-1}\circ\tilde{\tau}_C(v) = v$. This implies that $\tilde{\tau}_C(v)=v+$ lower order terms. Then consider the short exact sequence \[ 0 \rightarrow F_{n-1} \hookrightarrow F_n \rightarrow \wedge^nC \rightarrow 0\] By induction hypothesis $\tilde{\tau}_C$ is an isomorphism on $F_{n-1}$ and by the above argument it induces identity on $\wedge^nC$. Hence it is an isomorphism on $F_n$ for every $n$ which implies it is a coalgebra isomorphism of $SC$. 

\end{proof}

\section{Related Work}

Our focus on the cumulant bijection was directly inspired by a lecture of Jae Suk Park at CUNY November 2011. The ideas in the lecture have been developed in the two papers \cite{homotopy probability I} and \cite{homotopy probability II}.

The ``induced cumulant bijection" is used in \cite{3D fluids} to set up potential algorithms for computing 3D fluid motion based on differential forms and the integration deformation retract to cochains.


\begin{thebibliography}{9}
\bibitem{homotopy probability I}
  Gabriel C. Drummond-Cole, Jae-Suk Park, John Terilla. ``Homotopy Probability Theory I". preprint arXiv February 2013

\bibitem{homotopy probability II}
  Gabriel C. Drummond-Cole, Jae-Suk Park, John Terilla. ``Homotopy Probability Theory II". preprint arXiv February 2013

\bibitem{3D fluids}
D. Sullivan. ``3D Incompressible Fluids: Combinatorial Models, Eigenspace Models, and a Conjecture about Well-posedness of the 3D Zero Viscosity Limit". to appear in JDG Hirzebruch Volume 2014  

\end{thebibliography}
\end{document}